\documentclass[reqno,12pt]{amsart} 
\usepackage[top=1.5in,right=1.125in,left=1.125in,bottom=1.5in]{geometry}
\usepackage{amssymb}
\usepackage{amsmath}
\usepackage{color}
\usepackage{graphicx}

\definecolor{red}{rgb}{0.7,0,0}
\definecolor{grey}{RGB}{112,112,112}
\definecolor{blue}{RGB}{034,113,179}
\usepackage[colorlinks=true,citecolor=blue,linkcolor=red,urlcolor=grey,backref=page]{hyperref}

\newcommand{\koniec}{\begin{flushright}  $\Box $ \end{flushright}}
\newtheorem{theo}{Theorem}[section] 
\newtheorem*{theo*}{Theorem}
\newtheorem{prop}[theo]{Proposition}  

\newtheorem{defi}[theo]{Definition}

\theoremstyle{remark}

\newcommand{\ce}{\mathcal{E}}

\newcounter{mnotecount}[section]

\renewcommand{\themnotecount}{\thesection.\arabic{mnotecount}}
\newcommand{\pp}{\mbox{$\scriptsize{\Pi}$}}

\newcommand{\mnote}[1]
{\protect{\stepcounter{mnotecount}}$^{\mbox{\footnotesize
$
\bullet$\themnotecount}}$ \marginpar{
\raggedright\tiny\em
$\!\!\!\!\!\!\,\bullet$\themnotecount: #1} }

\newcommand{\hook}{{\setlength{\unitlength}{11pt}   
                   \begin{picture}(.833,.8)
                   \put(.15,.08){\line(1,0){.35}}
                   \put(.5,.08){\line(0,1){.5}}
                   \end{picture}}}

\newcommand{\Z}{\mathbb{Z}}
\newcommand{\PP}{\mathbb{P}}
\newcommand{\RP}{\mathbb{RP}}
\newcommand{\R}{\mathbb{R}}

\newcommand{\Rho}{\mathrm{P}}

\def\p{\partial}
\def\be{\begin{equation}}
\renewcommand{\d}{d}

\def\ee{\end{equation}}

\def\bea{\begin{eqnarray}}
\def\eea{\end{eqnarray}}

\def\ov{\overline}

\numberwithin{equation}{section}
\begin{document} \date{11 February 2020}
\title[projective and $c$--projective compactifications of 
Einstein metrics]{Some examples of projective and $c$--projective compactifications of 
Einstein metrics}

\author{Maciej Dunajski}
\address{Department of Applied Mathematics and Theoretical Physics\\ 
University of Cambridge\\ Wilberforce Road, Cambridge CB3 0WA, UK.}
\email{m.dunajski@damtp.cam.ac.uk}

\author{A.\ Rod Gover}
\address{Department of Mathematics\\
 The University of Auckland, \\
Private Bag 92019, Auckland 1142, New Zealand}
\email{r.gover@auckland.ac.nz}

\author{Alice Waterhouse}
\address{Department of Applied Mathematics and Theoretical Physics\\ 
University of Cambridge\\ Wilberforce Road, Cambridge CB3 0WA, UK.}
\email{aw592@cam.ac.uk}
\begin{abstract}
We construct several examples of compactifications of Einstein metrics. We show that
the Eguchi--Hanson instanton admits a projective compactification which is non--metric, and that
a metric cone over any (pseudo)--Riemannian manifolds admits a metric projective compactification. 
We construct a para--$c$--projective compactification of neutral signature Einstein metrics canonically defined on certain rank--$n$ affine bundles $M$ over $n$--dimensional manifolds endowed with projective structures.
 \end{abstract}


\maketitle

\section{Introduction}
There are several notions of compactifications of a (pseudo) Riemannian
manifold $(M, g)$.  In a conformal compactification $(\ov{M},\ov{g})$
one has that $\ov{M}$ is a manifold with boundary such that $M$ is the
interior of $\ov{M}$, and there is a defining function $T$ for the
boundary such that the metric $\ov{g}=T^2g$ smoothly extends to the
boundary $\partial M$ of $\ov{M}$  ($T$ being a {\em defining function} for $\partial M$ means that $\partial M=\mathcal{Z}(T):=\{p\in \ov{M}~:~T(p)=0\}$ and $dT$ is nowhere zero on $\partial M$.)  The geodesics of $g$ do not correspond to
geodesics of $\ov{g}$, but the angles are preserved. This kind of
compactification has proven to be useful in studying the causal
structure of space times in general relativity \cite{penrose}, and
quantum field theory \cite{witten}. It also underlies formulating the
boundary conditions \cite{uhlen} of conformally invariant field
equations like the Yang--Mills theory where the curvature decay rate
on $\R^4$ is equivalent to the connection extending to a one--point
compactificaton $\ov{\R}^4=\R^4\cup\{\infty\}=S^4$. In this case the round metric on $S^4$ is conformally equivalent to a flat
Euclidean metric on $\R^4$.
 
The conformal compactification is not available or not natural for many complete metrics that one may want to
compactify, which motivates a search for other compactification mechanisms.
 In a projective compactification of a pseudo--Riemannian manifold \cite{CG0}
the unparametrised geodesics of $(M, g)$ 
smoothly extend to a boundary $\p M$ of the manifold
$\ov{M}=M\cup\p M$ (see \S\ref{section2} for definitions). 
In general
this manifold does not carry a metric, but only an affine connection $\ov{\nabla}$ belonging to the projective equivalence class
containing the Levi--Civita connection of $g$. This kind of compactification is naturally applicable to scattering problems \cite{melrose}.

 There are two related concepts of compactifications: In a
 $c$--projective compactification \cite{CG} of an almost complex
 manifold $(M, J)$ with complex connection $\nabla$ the compactifying
 connection $\ov{\nabla}$ belongs to the $c$--projective equivalence
 class of $\nabla$, i.e. $\nabla$ and $\ov{\nabla}$ preserve $J$, have
 the same torsion, and share the same $J$--planar curves (see
 \S{\ref{section_c}} for definitions). In the para--$c$--projective
 compactification which we shall introduce in \S\ref{section_c} the
 endomorphism $J:TM\rightarrow TM$ squares to identity. The boundary
 $\p M$ acquires a contact structure, and a conformal metric on the
 contact distribution.

This paper is organised as follows. In \S\ref{section2} we shall
review the notion of the projective compactificatiton, and show that
the Eguchi--Hanson gravitational instanton can be projectively
compactified. We shall then introduce the notion of a {\em metric
  projective compactification}, where the connection $\ov{\nabla}$ is
the Levi--Civita connection of some metric $\ov{g}$ on $\ov{M}$. We
shall prove (Theorem \ref{theo1}) that a metric cone over a
(pseudo)--Riemannian manifold admits a metric projective
compactification.  In \S\ref{section_c} and \S\ref{section_4} we shall
construct a para-c-projective compactification of a neutral signature
Einstein metric $g$ defined on a projectivised tractor bundle $M$ of
any projective structure (Theorem \ref{metric} and Theorem
\ref{our_thm}).  The model for this construction will be the
compactification of $M=SL(n+1)/GL(n)$ which corresponds to the flat
projective structure on $\RP^n$.
\subsection*{Acknowledgements}
The work of MD has been partially supported by STFC consolidated grant no. ST/P000681/1. ARG gratefully acknowledges support from the Royal
Society of New Zealand via Marsden Grants 13-UOA-113 and 16-UOA-051.
AW is grateful for support from the Sims Fund. MD acknowledges the hospitality of the University of Auckland
where this work has started, and ARG acknowledges the hospitality of the University of Cambridge where it has finished.
MD is grateful to Dmitri Alekseevsky and Vladimir  Matveev for useful 
correspondence.
\section{Metric and non--metric projective compactifications}
\label{section2}
In this section we shall introduce the concept of a metric projective compactification
of a manifold $M$ with an affine connection $\nabla$, and give some examples of metric
and non-metric projective compactifications.
\begin{defi}
\label{defi1}
An affine connection $\nabla$ on $M$ admits a projective compactification of order $\alpha$ to a manifold with boundary $\overline{M}=M\cup\p M$ if there exists a function $T:\overline{M}\rightarrow \R$ such that $T=0$ is the boundary $\p M\subset \overline{M}$, the differential $dT$ does not vanish on $\p M$, and a projectively equivalent connection $\overline{\nabla}$ on $M$ defined by
\be
\label{connection}
\overline\nabla_X Y=\nabla_X Y+\Upsilon(X)Y+\Upsilon(Y)X
\ee
with
\[
\Upsilon=\frac{dT}{\alpha T}
\]
extends smoothly to $\p M$.
\end{defi}

In \cite{CG0} it was shown that if $\nabla$ is the Levi--Civita connection of a (pseudo)--Riemannian metric $g$ on $M$, then a sufficient condition for a projective compactification of order $\alpha>0$ (such that 
$2/\alpha \in \mathbb{Z}$)   to exist
is that near the boundary $\mathcal{Z}(T)$ the metric $g$ can be put in the form
\be
\label{project_comp}
g=C\frac{dT^2}{T^{4/\alpha}}+\frac{1}{T^{2/\alpha}}h,
\ee
for some $h$
which smoothly extends to the boundary $\mathcal{Z}(T)$ and restricts
to a pseudo-Riemannian metric there, and some constant $C$. Moreover
if $g$ is Ricci--flat, and a projective compactification exists, then
necessarily $\alpha=1$.

In the examples below we shall make use of a stronger notion of {\it metric} projective compactifications
\begin{defi}
A projective compactification from Definition \ref{defi1}
is {\it metric} if $\overline{\nabla}$ is the 
Levi--Civita connection of some (pseudo)--Riemannian metric $\ov{g}$ on $\overline{M}$.
\end{defi}
\subsection{Example: Flat space.} Consider a flat metric on $\R^n$ of the form
\be
\label{g_flat}
g_{\mbox{flat}}=dr^2+r^2 \gamma_{S^{n-1}},
\ee
where $\gamma_{S^{n-1}}$ is the round metric on a sphere $S^{n-1}$ with 
Ricci scalar equal to $(n-1)(n-2)$. 
Setting $T=r^{-1}$ puts $g_{\mbox{flat}}$ in the form (\ref{project_comp}), but the resulting 
connection (\ref{connection}) is not metric\footnote{To show this, compute
the Ricci tensor $\overline{R}$ of (\ref{connection}). If (\ref{connection}) was metric for some metric
$\overline{g}$ then $\overline{R}$ would be a Ricci--tensor of $\overline{g}$, and so 
(as $\overline{g}$ is projectively flat) it would have to be a constant multiple of
$\overline{g}$ by the Beltrami theorem. Computing the Ricci tensor of the metric given by $\overline{R}$ shows that
it is impossible for any $\alpha$.}.
To construct a {\em metric projective--compactification}
consider a defining function given by
\[
T=\frac{1}{\sqrt{r^2+1}}.
\]
The metric $g_{\mbox{flat}}$ takes the form
\[
g_{\mbox{flat}}=\frac{dT^2}{T^4}+\frac{1}{T^2}\Big((1-T^2)\gamma_{S^{n-1}}+\frac{1}{1-T^2}dT^2)
\]
and $h$ reduces to $\gamma_{S^{n-1}}$ on the boundary $\mathcal{Z}(T)$. It can now be verified by direct calculation that
the connection (\ref{connection})  with $\Upsilon=T^{-1}dT$ is the Levi--Civita connection of the metric
\be
\label{metric_new}
\overline{g}=\frac{dr^2}{(1+r^2)^2}+\frac{r^2}{r^2+1}\gamma_{S^{n-1}}
=\frac{dT^2}{1-T^2}+(1-T^2)\gamma_{S^{n-1}}.
\ee
The metric $\overline{g}$ has constant positive curvature,  is defined on an open set
of a round sphere $S^{n}$, and extends to the boundary $T=0$ where it induces the metric
$\gamma_{S^{n-1}}$.

This construction has the following natural geometric interpretation (Figure \ref{fig1}). Consider a central projection $\pi$
from a hemi--sphere ${\mathcal S}\subset S^{n}$ to $\R^n$. If the metric on $S^n$ is
\[
g_{S^{n}}=d\theta^2+\cos{\theta^2}\gamma_{S^{n-1}}
\]
and the inverse map $\pi^{-1}:\R^n\rightarrow {\mathcal S}$ is given by 
$
\cos^2{\theta}={r^2}{(r^2+1)}^{-1}
$
then the pull back of $(\pi^{-1})^*g_{S^{n+1}}$ to $\R^{n+1}$ is given by (\ref{metric_new}).
\begin{center}
\label{fig1}
\includegraphics[width=7cm,height=3cm,angle=0]{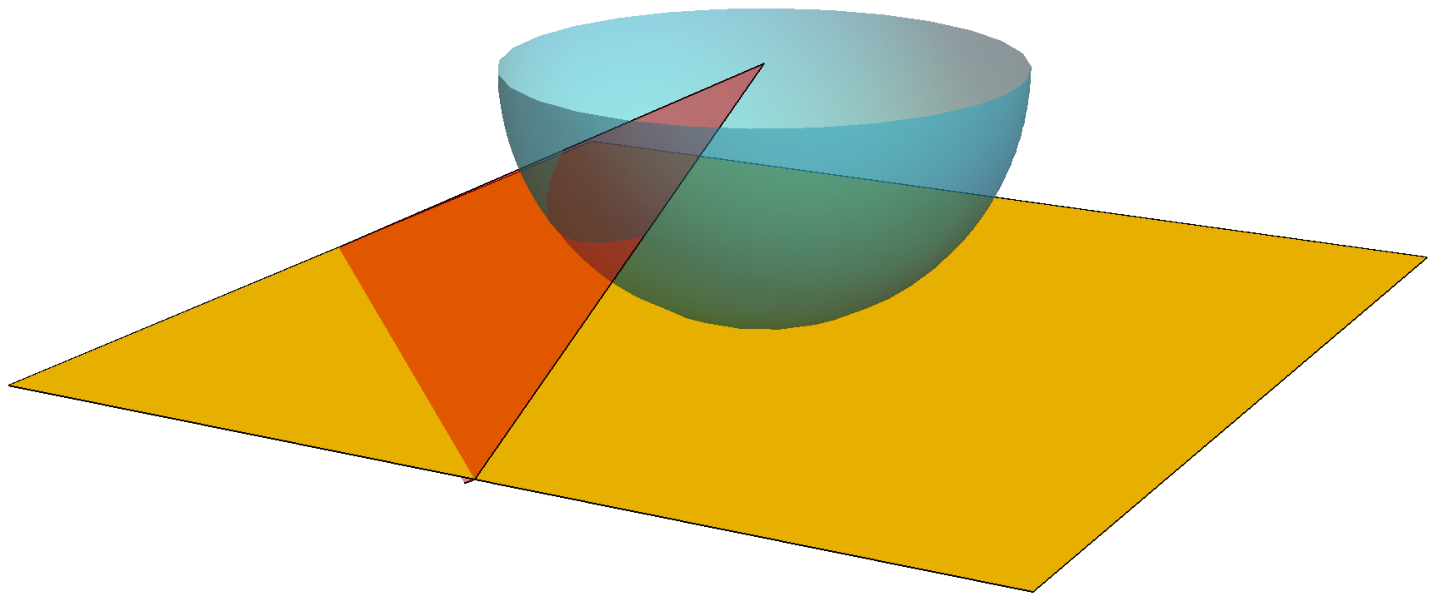}
\end{center}
\subsection{Example: The Eguchi--Hanson manifold}
The Eguchi--Hanson metric \cite{EH} is given 
by\footnote{Left invariant one--forms $\sigma_i, i=1, 2, 3$ 
on the group manifold $SU(2)$ satisfy
\be
\label{cijk}
d\sigma_1+\sigma_2\wedge\sigma_3=0, \quad
d\sigma_2+\sigma_3\wedge\sigma_1=0, \quad d\sigma_3+\sigma_1\wedge\sigma_2=0.
\ee
These one--forms can be represented in terms of Euler angles
by
\[
\sigma_1+i\sigma_2=e^{-i\psi}(d\theta+i\sin{\theta} d\phi), \qquad
\sigma_3=d\psi+\cos{\theta}d\phi,
\]
where to cover $SU(2)=S^3$ we require the ranges
\[
0\leq\theta\leq\pi, \quad 0\leq\phi\leq 2\pi, \quad
0\leq\psi\leq 4\pi.
\]
}
\be
\label{Eguchi_Hanson}
g=\Big(1-\frac{a^4}{r^4}\Big)^{-1}dr^2+\frac{1}{4}r^2
\Big(1-\frac{a^4}{r^4}\Big)\sigma_3^2+\frac{1}{4}r^2(\sigma_1^2+\sigma_2^2).
\ee
The apparent singularity at $r=a$ is removed by allowing
\[
r>a,\qquad 
 0\leq\psi\leq 2\pi,\qquad 0\leq \phi\leq 2\pi,\qquad 0\leq\theta\leq\pi.
\]
Setting $\rho^2=r^2(1-(a/r)^4)$ and expanding the metric
near $r=a$ and fixing $(\theta, \phi)$ gives
$
g\sim (d\rho^2+\rho^2 d\psi^2)/4.
$
In the standard spherical polar coordinates $\psi$ has
a period $4\pi$ on $SU(2)$. In our case
the period of $\psi$ is $2\pi$ to achieve regularity.
Therefore the surfaces
of constant $r$ 
are real projective planes defined by identifying the antipodal points
on the sphere, $\RP^3=S^3/\Z^2$. At large values
of $r$ the metric looks like $\R^4/\Z^2$ rather than Euclidean space.
Thus the Eguchi--Hanson metric is an example
of the  Asymptotically Locally
Euclidean manifold (see e.g. \cite{ADM} for a discussion of this class of manifolds in the context of twistor theory).

To projectively compactify (\ref{Eguchi_Hanson}) introduce the defining 
function
\[
T=\frac{1}{r}
\]
so that (\ref{Eguchi_Hanson}) takes the form (\ref{project_comp}) with
\[
h=\frac{4a^4T^6}{1-a^4T^4}dT^2+(1-a^4T^4)\sigma_3^2+\sigma_1^2+\sigma_2^2.
\]
and the topology of the boundary is $\RP^3$.
The resulting connection (\ref{connection}) is non-metric. One can ask whether there exists another choice of
the defining function which leads to  a metric projective compactification. The answer to that question is negative, as Eguchi--Hanson is geodesically rigid: up to a constant non--zero multiple there exists only one metric in its projective class. This follows from a combination of the following two facts:
(A) Two Ricci--flat metrics in dimension four are projectively equivalent iff they are affinely equivalent (i.e. they share the same Levi--Civita connection). (B)
In the positive signature two affine equivalent Ricci--flat metrics are flat.
See \cite{matveev} for proofs of these facts.
\subsection{Metric cones}
The projective equivalence of (\ref{metric_new})
and (\ref{g_flat}) is an example of the following result of Levi--Civita \cite{LC}
\begin{prop}
\label{proplc}
The metrics 
\[
g=dr^2+f(r)\gamma, \quad\mbox{and}\quad \overline{g}=\frac{1}{(\kappa f(r)+1)^2}
dr^2 +\frac{f(r)}{\kappa f(r)+1}\gamma
\]
are projectively equivalent for any constant $\kappa$. Here $f$ is an
arbitrary function of $r$, and $\gamma$ is an arbitrary $r$--independent metric.
\end{prop}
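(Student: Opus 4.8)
The plan is to verify directly that the two metrics $g$ and $\overline g$ share the same family of unparametrised geodesics, using the standard criterion that two connections $\nabla$ and $\overline\nabla$ are projectively equivalent precisely when they differ by a term of the form $\overline\nabla_X Y = \nabla_X Y + \Upsilon(X)Y + \Upsilon(Y)X$ for some one--form $\Upsilon$, as in \eqref{connection}. Both metrics are of warped--product type, so their Levi--Civita connections are easy to compute in the adapted coordinate $r$ together with the coordinates on the factor carrying $\gamma$. First I would compute the Christoffel symbols of $g$ and of $\overline g$ explicitly, splitting the indices into the radial direction and the ``fibre'' directions tangent to $\gamma$, and exploiting that $\gamma$ is $r$--independent so that the $\gamma$--block contributes a fixed connection plus $r$--dependent warping terms.

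Next I would form the difference tensor $\Pi(X,Y) := \overline\nabla_X Y - \nabla_X Y$ of the two connections and show that it has the projective form $\Upsilon(X)Y+\Upsilon(Y)X$ for a suitable closed one--form $\Upsilon = \Upsilon(r)\,dr$ depending only on $r$. Concretely, writing $\overline g = A(r)\,dr^2 + B(r)\gamma$ with $A = (\kappa f+1)^{-2}$ and $B = f(\kappa f+1)^{-1}$, I expect the only nonzero components of $\Pi$ to lie in the purely radial slot and in the mixed radial--fibre slots, and I would check that in each case the difference equals the predicted expression with $\Upsilon(r) = \tfrac12\,d(\log\cdot)/dr$ of an appropriate combination of $A$, $B$ and $f$. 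The natural candidate, by comparison with the flat--space example \eqref{metric_new} where $\kappa=1$ and the relevant logarithmic derivative involves $(1+r^2)$, is that $\Upsilon$ is proportional to $d\log(\kappa f+1)$; this is what one should aim to confirm.

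The main computational obstacle is organising the fibre--direction Christoffel symbols cleanly. Because $\gamma$ carries its own intrinsic connection $\widehat\Gamma$ that is common to both $g$ and $\overline g$, those intrinsic pieces cancel identically in the difference $\Pi$, so the only surviving contributions come from the $r$--derivatives of the warping functions $f$, $A$ and $B$. The key identities to verify are therefore purely one--dimensional relations among $f$, $A$, $B$ and their first derivatives: for the radial slot one needs $\tfrac12\,A'/A$ to match $2\Upsilon(r)$, and for the mixed slots one needs $\tfrac12\,B'/B$ versus $\tfrac12\,f'/f$ to differ by exactly $\Upsilon(r)$. Substituting $A=(\kappa f+1)^{-2}$ and $B=f(\kappa f+1)^{-1}$ reduces each of these to elementary differentiation, and I expect both to collapse to $\Upsilon(r)=-\tfrac12\,\kappa f'/(\kappa f+1)$, i.e. $\Upsilon = -\tfrac12\,d\log(\kappa f+1)$, confirming projective equivalence for every constant $\kappa$. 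Since $\Upsilon$ is manifestly exact it is closed, consistent with the fact that both connections are Levi--Civita (hence special) in the projective class.
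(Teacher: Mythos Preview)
Your proposal is correct and follows exactly the route the paper takes: the paper's proof simply asserts that the Levi--Civita connections of $g$ and $\overline g$ are related by the projective change \eqref{connection} with $\Upsilon = -\tfrac{\kappa}{2(1+\kappa f)}\,f'\,dr$, which is precisely the $\Upsilon = -\tfrac12\,d\log(\kappa f+1)$ you predict, and your outline just spells out the Christoffel--symbol computation that the paper leaves implicit. One minor addition to your list of checks: besides the purely radial and mixed slots you should also verify that the fibre--fibre component $\Pi^r_{ab}$ vanishes (equivalently $B'/A=f'$), which indeed holds for $A=(\kappa f+1)^{-2}$, $B=f(\kappa f+1)^{-1}$.
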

\noindent
{\bf Proof.}
One way to establish this Proposition is to observe that Levi--Civita connections
of $g$ and $\overline{g}$ are related by (\ref{connection}) with
\[
\Upsilon=-\frac{\kappa}{2(1+\kappa f(r))}\frac{df}{dr} dr.
\]
\koniec
Let $(N, \gamma)$ be a (pseudo) Riemannian manifold of dimension $(n-1)$.
A metric cone of $(N, \gamma)$ is a (pseudo) Riemannian manifold 
$M=N\times\R^+$ with the metric 
\be
\label{cone_metric}
g=dr^2+r^2 \gamma
\ee
We shall use Proposition \ref{proplc} to prove the following
\begin{theo}
\label{theo1}
The metric cone $(M, g)$ given by (\ref{cone_metric}) of a (pseudo)--Riemannian manifold $(N, \gamma)$ admits
a metric projective compactification of order 1.
\end{theo}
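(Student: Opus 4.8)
The plan is to apply Proposition~\ref{proplc} directly, exploiting the fact that Levi--Civita's result holds for an \emph{arbitrary} $r$--independent metric $\gamma$. Consequently the computation carried out for flat space leading to~(\ref{metric_new}) should go through essentially verbatim for a general cone. The cone metric~(\ref{cone_metric}) is of the form $g=dr^2+f(r)\gamma$ with $f(r)=r^2$, so I would take $\kappa=1$ in Proposition~\ref{proplc} to produce the projectively equivalent metric
\[
\overline{g}=\frac{1}{(r^2+1)^2}dr^2+\frac{r^2}{r^2+1}\gamma .
\]

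Next I would introduce the defining function $T=(r^2+1)^{-1/2}$, exactly as in the flat example. A short substitution gives $r^2/(r^2+1)=1-T^2$ and $(r^2+1)^{-2}dr^2=(1-T^2)^{-1}dT^2$, so that
\[
\overline{g}=\frac{dT^2}{1-T^2}+(1-T^2)\gamma .
\]
Since $\gamma$ is independent of $r$, and hence of $T$, the right--hand side extends smoothly across $T=0$: there $dT\neq 0$, the coefficient $(1-T^2)^{-1}$ is finite and nonzero, and $\overline{g}$ restricts to the nondegenerate metric $\gamma$ on the boundary $\partial M=\mathcal{Z}(T)$. Thus $\overline{M}=M\cup\partial M$ is a manifold with boundary carrying the smooth (pseudo)--Riemannian metric $\overline{g}$, as required for a metric compactification.

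To identify the order, I would read off $\Upsilon$ from the proof of Proposition~\ref{proplc}: with $f=r^2$ and $\kappa=1$ one has $\Upsilon=-r(1+r^2)^{-1}dr$, and a direct computation shows $dT/T=-r(r^2+1)^{-1}dr$, so that $\Upsilon=dT/T$. Comparing with Definition~\ref{defi1}, this is precisely $\Upsilon=dT/(\alpha T)$ with $\alpha=1$. Because $\overline{\nabla}$ is by construction the Levi--Civita connection of the smooth metric $\overline{g}$, the compactification is metric and of order $1$.

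I do not expect a genuine obstacle here: the whole argument is a coordinate repackaging of the Levi--Civita pair, and its only substantive content is the observation that Proposition~\ref{proplc} is insensitive to the choice of $\gamma$. The points deserving care are the smoothness and nondegeneracy of $\overline{g}$ at $T=0$ and the verification that $\Upsilon$ takes the form demanded by Definition~\ref{defi1}; both are settled by the substitutions above, which mirror the flat--space computation~(\ref{metric_new}). One should also note that for $r>0$ we have $0<T<1$, so the expressions remain regular throughout and the signature of $\gamma$ (hence of $\overline{g}$) is preserved up to the boundary.
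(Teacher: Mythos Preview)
Your proposal is correct and follows essentially the same route as the paper: the same defining function $T=(r^2+1)^{-1/2}$, the same application of Proposition~\ref{proplc} with $f(r)=r^2$ and $\kappa=1$, and the same expression $\overline{g}=dT^2/(1-T^2)+(1-T^2)\gamma$ extending smoothly to $T=0$. The only cosmetic difference is that the paper first puts $g$ in the asymptotic form~(\ref{project_comp}) to read off $\alpha=1$, whereas you extract $\alpha=1$ by computing $\Upsilon$ directly from the formula in the proof of Proposition~\ref{proplc}; both amount to the same verification.
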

\noindent
{\bf Proof.}
 Taking the defining function to be
\[
T=\frac{1}{\sqrt{r^2+1}} 
\]
leads to
\[
{g}=\frac{dT^2}{T^4}+\frac{1}{T^2}h,\quad\mbox{where}\quad
h=\frac{dT^2}{1-T^2}+(1-T^2)\gamma.
\]
The connection $\overline{\nabla}$ given by (\ref{connection})
with $\Upsilon=T^{-1}dT$
extends to the boundary, and is the Levi--Civita connection
of the metric
\begin{eqnarray*}
\overline{g}&=&\frac{dr^2}{(1+r^2)^2}
+\frac{r^2}{1+r^2} \gamma\\
&=&\frac{dT^2}{1-T^2}+(1-T^2)\gamma
\end{eqnarray*}
in agreement with Proposition \ref{proplc}.
The metric $\overline{g}$ extends to the boundary.
\koniec
This construction leads to global examples of metric projective compactifications.
It follows from the general theory \cite{bar} that metrics cones over Einstein manifolds
with non--zero Ricci scalar which admit Killing spinors are manifolds with special holonomy. 
In fact the first examples of non--compact manifolds with exceptional holonomies $G_2$ and $Spin(7)$
were constructed by Bryant \cite{bryant_exeptional} as 
metric cones over the nearly K\"ahler manifold $SU(3)/T^2$ and
the weak $G_2$ holonomy manifold $SO(5)/SO(3)$ respectively.
In view of Theorem \ref{theo1} these metric cones admit metric projective compactifications.

\section{Para-c-projective compactifications of Einstein metrics} 
\label{section_c}

In \cite{CG} the concept of $c$--projective compactification was
defined. It is based on almost $c$--projective geometry \cite{c_proj},
an analogue of projective geometry defined for almost complex
manifolds $(M,J)$ in which the equivalence class of connections
defining the $c$--projective structure must be \textit{complex} and
\textit{minimal}.\footnote{Recall that a connection on an almost
  complex manifold $(M,J)$ is called \textit{complex} if it preserves
  $J$ and \textit{minimal} if the torsion is just the Nijenhuis tensor
  of $J$ up to a constant factor.} Here we review the definition of
$c$--projective compactification, modifying to the ``para'' case where
the para--almost--complex structure squares to $Id$ rather than $-Id$. We
then introduce a class of $2n$--dimensional, neutral signature
Einstein metrics arising from projective structures in dimension $n$,
and show that these admit a compactification which we call
\textit{para}--$c$--projective.

Although $c$--projective compactification is defined for any almost complex manifold, the definition can be applied to pseudo--Riemannian metrics $g$ which are Hermitian with respect to the almost complex structure so long as there exists a connection which preserves both $g$ and $J$ and has minimal torsion. Such Hermitian metrics are said to be \textit{admissible}.  Note that such a connection, if it exists, is uniquely defined, since the conditions that it be complex and minimal determine its torsion. It is thus given by the Levi--Civita connection of $g$ plus a constant multiple of the Nijenhuis tensor of $J$.

We make the following definition in the para--$c$--projective case.

\begin{defi}
\label{defi_1}  Let $(M,g,J)$ be a para--Hermitian manifold, and let $\nabla^L$ be a connection which preserves both $g$ and $J$ and has minimal torsion. The structure $(M, g, J)$
admits a para--$c$--projective compactification to a manifold with boundary $\ov{M}=M\cup\p M$,
if there exists a function $T:\ov{M}\rightarrow \R$ such that 
$\mathcal{Z}(T)$ (the set where $T=0$ on $\ov{M}$) is the boundary
$\p M\subset \ov{M}$, the differential $dT$ does not vanish on $\p M$, and the
connection
\[
\overline{\nabla^L}_X Y={\nabla^L}_X Y+\Upsilon(X)Y+\Upsilon(JX)JX
+\Upsilon(Y)X+\Upsilon(JY)JX, \quad\mbox{where}\quad\Upsilon=\frac{dT}{2T},
\]
extends to $\ov{M}$.
\end{defi}

Note that the para--$c$--projective change of connection $\nabla^L\rightarrow \ov{\nabla^L}$ differs from the $c$--projective case in the signs of some of the terms, to account for the fact that $J$ squares to the $Id$ rather than $-Id$.

It follows easily from this definition that the endomorphism $J$ on $M$ naturally extends to all of $\ov{M}$ by parallel transport with respect to $\ov{\nabla^L}$. It thus defines an almost para--CR structure on the hypersurface distribution $\mathcal{D}$ defined by $\mathcal{D}_x:=T_x\p M \cap J(T_x \p M)$. It can be shown (see Lemma 5 of \cite{CG} and modify to the case $J^2=Id$) that this almost para--CR structure is non--degenerate if and only if for any local defining function $T$ the one--form $\theta=dT\circ J$ restricts to a contact form on $\p M$.

The first main result of \cite{CG} is  Theorem 8 in this reference, which gives a local form for an admissible Hermitian metric which is sufficient for the corresponding $c$--projective structure to be $c$--projectively compact. The theorem is stated below, adapted to the para--$c$--projective case. Note that this includes an assumption that the Nijenhuis tensor $\mathcal{N}$ of $J$ takes so--called \textit{asymptotically tangential values}. This is equivalent to the following statement in index notation:
\be
\label{Nijenhuis_condition}
\Big({\mathcal{N}^{a}}_{bc}\nabla_a T\Big)|_{T=0}=0.  \ee
The
following result arises by a trivial adaption of the arguments in
\cite{CG} for the almost complex case, and so further details may be obtained from that source.

\begin{theo}[\cite{CG}] \label{CGthm}
Let $\ov{M}$ be a smooth manifold with boundary $\p M$ and interior $M$. Let $J$ be an almost para--complex structure on $\ov{M}$, such that $\p M$ is non--degenerate and the Nijenhuis tensor $\mathcal{N}$ of $J$ has asymptotically tangential values. Let $g$ be an admissible pseudo--Riemannian Hermitian metric on $M$. For a local defining function $T$ for the boundary defined on an open 
subset ${\mathcal U}\subset \ov{M}$, put $\theta=dT\circ J$ and, given a non--zero real 
constant $C$, define a Hermitian $\big(\negthinspace\begin{smallmatrix}
0 \\ 2
\end{smallmatrix}\negthinspace\big)$--tensor field $h_{T,C}$ on 
${\mathcal U}\cap M$ by
\[
h_{T,C}:=Tg+\frac{C}{T}(dT^2-\theta^2).
\]
Suppose that for each $x\in\p M$ there is an open neighbourhood 
${\mathcal{U}}$ of $x$ in $\ov{M}$, a local defining function $T$ defined on 
${\mathcal{U}}$, and a non--zero constant $C$ such that
\begin{itemize}
\item $h_{T,C}$ admits a smooth extension to all of $\mathcal{U}$
\item for all vector fields $X,Y$ on $U$ with $dT(Y)=\theta(Y)=0$, the function $h_{T,C}(X,JY)$ approaches $Cd\theta(X,Y)$ at the boundary.
\end{itemize}
Then $g$ is $c$--projectively compact.
\end{theo}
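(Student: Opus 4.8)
The plan is to transplant the proof of Theorem~8 of \cite{CG} to the para--complex setting; since $J^2=\mathrm{Id}$ rather than $-\mathrm{Id}$, the only genuine work is to track the signs that enter through the para--Hermitian compatibility $g(JX,JY)=-g(X,Y)$ and through $\theta=dT\circ J$, which are precisely the signs flagged in the remark following Definition~\ref{defi_1}.

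First I would reduce the statement to the Levi--Civita connection $\nabla^g$ of $g$. By admissibility $\nabla^L=\nabla^g+c\,\mathcal{N}$ for a constant $c$, and $\mathcal{N}$ is a smooth tensor field on all of $\ov{M}$. Hence the para--$c$--projective modification $\ov{\nabla^L}$ of Definition~\ref{defi_1} differs from the corresponding modification of $\nabla^g$ only by a smooth term together with contractions of $\mathcal{N}$; the asymptotically tangential condition (\ref{Nijenhuis_condition}) is exactly what guarantees that these Nijenhuis contributions remain bounded up to the boundary and are compatible with the parallel extension of $J$. It therefore suffices to show that $\nabla^g$, modified by $\Upsilon=dT/(2T)$ with the para--complex signs, extends smoothly to $\p M$.

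Next I would read off the singular structure of $g$ from the normal form. Inverting $h_{T,C}=Tg+\tfrac{C}{T}(dT^2-\theta^2)$ gives $g=\tfrac1T h-\tfrac{C}{T^2}(dT^2-\theta^2)$, where $h:=h_{T,C}$ is smooth on $\mathcal{U}$ by the first hypothesis and non--degenerate on $\p M$ by the second: its restriction to the contact distribution $\mathcal{D}$ agrees with $C\,d\theta$, which is non--degenerate because $\p M$ is non--degenerate. Thus $g$ is a $T^{-1}$ rescaling of a smooth metric, corrected by a $T^{-2}$ singularity supported on the plane spanned by $dT$ and $\theta$, mirroring in the transverse directions the projective normal form (\ref{project_comp}).

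The core step is the Christoffel comparison. Writing $\nabla^h$ for the (smooth) Levi--Civita connection of $h$ and $\Gamma:=\nabla^g-\nabla^h$ for the difference tensor, one has $g(\Gamma(X,Y),Z)=\tfrac12\big[(\nabla^h_Xg)(Y,Z)+(\nabla^h_Yg)(X,Z)-(\nabla^h_Zg)(X,Y)\big]$. Differentiating the normal form produces $T^{-1}$, $T^{-2}$ and $T^{-3}$ contributions which, after raising an index with $g^{-1}$, reorganise into the singular terms $\Upsilon(X)Y+\Upsilon(Y)X$ together with their para--complex partners $\Upsilon(JX)JY+\Upsilon(JY)JX$. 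I expect the main obstacle to lie precisely here: one must verify that, with $J^2=\mathrm{Id}$ and $g(J\cdot,J\cdot)=-g(\cdot,\cdot)$, the factor $\tfrac12$ in $\Upsilon$ and the para--complex signs in Definition~\ref{defi_1} are exactly those for which adding $\Upsilon(X)Y+\Upsilon(JX)JY+\Upsilon(Y)X+\Upsilon(JY)JX$ cancels \emph{both} the $T^{-1}$ and the $T^{-2}$ singular parts of $\Gamma$. The second hypothesis enters at sub--leading order: it forces the $d\theta$--block of $h$ to control the off--diagonal ($\mathcal{D}$--to--transverse) entries of $\nabla^h g$, which is what makes the $T^{-2}$ cancellation go through rather than merely the leading $T^{-1}$ one. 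Once these cancellations are confirmed, $\ov{\nabla^L}=\nabla^h+(\text{smooth})$ on $\mathcal{U}\cap M$, and smoothness of $h$, $J$ and $\mathcal{N}$ on $\ov{M}$ yields the smooth extension of $\ov{\nabla^L}$ to $\p M$, establishing para--$c$--projective compactness.
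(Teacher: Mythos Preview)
Your proposal is correct and is exactly the approach the paper takes: the paper does not give an independent proof of this theorem but states that it ``arises by a trivial adaption of the arguments in \cite{CG} for the almost complex case,'' and your outline is precisely that adaptation, with the sign-tracking for $J^2=\mathrm{Id}$ and $g(J\cdot,J\cdot)=-g(\cdot,\cdot)$ made explicit. The reduction from $\nabla^L$ to $\nabla^g$ via the asymptotically tangential Nijenhuis condition, the inversion of the normal form, and the Christoffel comparison cancelling the $T^{-1}$ and $T^{-2}$ singularities against the para--$c$--projective modification terms are the correct steps, matching Theorem~8 of \cite{CG}.
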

The statement in  Theorem \ref{CGthm} does not depend on the choice of $T$. Different choices of $T$ result in rescalings of the contact form $\theta$ on the boundary by a nowhere vanishing function.
\section{Example: A class of neutral signature Einstein metrics} 
\label{section_4}
\newcommand{\ol}[1]{\overline{#1}} \newcommand{\bp}{\boldsymbol{p}}
\newcommand{\cT}{\mathcal{T}} \newcommand{\cE}{\mathcal{E}} 
In this Section we shall  introduce a class of para--Hermitian metrics, which we will show to be para--$c$--projectively compact by virtue of Theorem \ref{CGthm}.
\begin{defi}
A projective structure $[\nabla]$ on an $n$--dimensional manifold $N$ is an equivalence class of torsion--free affine
connection on $N$ which share the same unparametrised geodesics.  Two connections $\overline{\nabla}$ and $\nabla$
belong to the same projective class if they are related by (\ref{connection}) for some one--form
$\Upsilon$ on $N$.
\end{defi}
Given a projective structure $[\nabla]$ on an $n$--dimensional manifold $N$, there exists  a canonical neutral signature Einstein metric $g$ with non-zero scalar curvature, as well as a symplectic form $\Omega$, on the total space $M$ of a certain rank $n$ affine bundle over $N$. The general construction of this metric and the proof of the projective invariance has been presented in \cite{DM} in dimension four, and
in \cite{DW20} in general dimension. Here we shall give an explicit coordinate description from \cite{DM}. In Theorem \ref{metric} we shall show how to recover the metric and the symplectic form (\ref{main_metric}) from the natural pairing on the co-tractor bundle.

The sections of the affine bundle $M \rightarrow N$ are in one-to-one
correspondence with the $[\nabla]$ representative connections
(see \S 2.4 and \S 3.4 in \cite{DM}), 
and
hence the choice of a representative connection $\nabla \in [\nabla]$
provides a diffeomorphism $T^*N \rightarrow M$. Pulling back the pair
$(g,\Omega)$ with this diffeomorphism gives a pair
$(g_{\nabla},\Omega_{\nabla})$ on $T^*N$ which -- in local coordinates
$(x^i,\xi_i)$ on the cotangent bundle $\nu : T^*N \to N$ -- takes the
form
\begin{align}
\label{main_metric}
g&=\left(d\xi_i-\left(\Gamma_{ij}^k \xi_k-\xi_i\xi_j-\Rho_{(ij)}\right)\d x^i\right)\odot \d x^j,\\
\Omega&=\d \xi_i\wedge \d x^i+\Rho_{[ij]}\d x^i\wedge \d x^j,
\quad\mbox{where}\quad
i, j =1, \dots, n.
\nonumber
 \end{align}
Here $\Gamma^i_{jk}$ denote the Christoffel symbols and $\Rho_{ij}$ is the Schouten tensor of of $\nabla$. 

In \cite{DM} it was shown that the manifold $M$ can be identified with the
complement of
an $\RP^{n-1}$ sub-bundle in the  projectivisation $\mathbb{P}(\cT^*)$ of a certain rank $(n+1)$ vector bundle (the so called co-tractor bundle) $\cT^*$  
over $N$. In the special case where $N=\RP^n$, and $[\nabla]$ is projectively 
flat the manifold $M=SL(n+1, \R)/GL(n, \R)$ can be identified with the projection of $\R^{n+1}\times \R_{n+1}\setminus {\mathcal Z}$, where ${\mathcal Z}$ denotes the set
of incident pairs (point, hyperplane). See \cite{mettler1} for other 
applications of (\ref{main_metric}).

The compactification procedure described in  
the Theorem \ref{our_thm} will, for the model, attach these incident pairs back to $M$, 
and more generally
(in case of a curved projective structure on $N$) will attach
the  $\RP^{n-1}$ sub-bundle of $\mathbb{P}(\cT^*)$. The boundary $\p M$ 
from Definition \ref{defi_1} will play a role of a submanifold manifold
separating two open sets in $\mathbb{P}(\cT^*)$ in the sense described 
in the next sub-Section.

\subsection{The constructions in tractor terms}\label{tractor-ver}
Let the  projective structure $(N, [\nabla])$, be represented by
some torsion free affine connection $\nabla$ on $N$, where   the latter has
dimension at least 2.  Let $\cE(1)\rightarrow N$ be  the line bundle which is the standard
$-2(n+1)$th root 
of the square of the canonical bundle of
$N$ (which, note, is canonically oriented).  For any
vector bundle $\mathcal{B}$ and line bundle $\cE(w)$ we write
$\mathcal{B}(w)$ as a shorthand for $\mathcal{B}\otimes \cE(w) $.

Canonically on  the projective manifold  $(N, [\nabla])$ there is the rank $(n+1)$
cotractor bundle \cite{BEG}
\[
\cT^*\rightarrow N .
\]
This has a composition sequence
\begin{equation}\label{co-euler}
0\to T^*N(1)\to \cT^*\stackrel{X}{\to}\cE(1)\to 0,
\end{equation}
where the map $X\in \Gamma(\cT(1))$ is called the (projective) {\em canonical
  tractor}.
Choosing a connection in $[\nabla]$ determines a splitting of this sequence and so then have
$\cT^*= \cE(1) \oplus  T^*N(1)$, and we can represent an element $V$ of $\cT^*$ as a pair
$(\sigma, \mu)=[V]_\nabla$ (see e.g. \cite{CG0}). Any other connection $\ol{\nabla}$ in $[\nabla]$ is related to $\nabla$ by (\ref{connection}), for some  1-form field  $\Upsilon$ on $N$, and the corresponding transformation 
\begin{equation}\label{ttrans}
  [V]_\nabla=(\sigma, \mu)\mapsto (\sigma, \mu +\sigma \Upsilon)=[V]_{\ol{\nabla}}.
\end{equation}
The main importance of $\cT^*$ is that it admits a canonical
projectively invariant {\em tractor connection} $\nabla^{\cT^*}$ given by
\be
\label{tractor_con}
{\quad{\nabla^{\cT}}_i \left(\begin{array}{c}
\sigma\\ 
\mu_j
\end{array} \right)= 
\left(\begin{array}{c} \nabla_i\sigma-\mu_i \\ 
\nabla_i\mu_j+\Rho_{ij}\sigma
\end{array} \right).}
\ee
We shall now present two variants of the 
construction of \cite{DM}, and then its compactification (to be made precise
in Theorem \ref{our_thm}).
We begin with
compactification of the construction in \cite{DM}. For simplicity let us assume that $N$ is
orientable.

\subsubsection{Compactification by line projectiviation:}
On the total space of $\cT^*$ we pullback
$\pi:\cT^*\to N$ along $\pi$ to get $\pi^*(\cT^*)\to \cT^*$ as a
vector bundle over the total space $\cT^*$. By construction this
bundle has a tautological section $U\in \Gamma (\pi^*(\cT^*))$.  We
also have $\pi^*(\cT(w))$ for any weight $w$, and we
shall write simply $X\in \Gamma(\pi^*(\cT(1)))$ for the pullback to
$\cT^*$ of the canonical tractor $X$ on $N$.

There is  a canonical density $\tau\in \Gamma(\pi^*\cE(1))$ given by
$$
\tau:= X\hook U .
$$

Now define
\be
\label{projection_map}
\kappa: \cT^*\longrightarrow \mathcal{M}:=\mathbb{P}(\cT^*)
\ee
by the fibrewise
projectivisation, and use also $\pi$ for the map to $N$:
$$
\pi:\mathcal{M}\to N.
$$ Note that $\tau$ is homogeneous of degree 1 up the fibres of the
map $\cT^*\to \mathcal{M}$. Thus $\tau$ determines, and is equivalent
to, a section (that we also denote) $\tau$ of a certain density bundle
$\pi^*(\cE(1))\otimes \cE_{\cT^*}(1)$, on $\mathcal{M}$ that for
simplicity we shall denote $\cE(1,1)$.  So $\mathcal{M}$ is stratified according to
whether or not $\tau$ is vanishing, and we write $\mathcal{Z}(\tau)$
to denote, in particular, the zero locus of $\tau$.

To elaborate
on the densities used here, and their generalisation to arbitrary
weights: By $\cE_{\cT^*}(w')$, for $w'\in \mathbb{R}$, we mean the line
bundle on $\mathbb{P}(\cT^*)$ whose sections correspond to functions
$f: \pi^*\cT^* \to\mathbb{R} $ that are homogeneous of degree $w$ in
the fibres of $\pi^*\cT^*\to \mathbb{P}(\cT^*)$. Then for any weight $w$ we also have $\cE(w)$ on $N$ and its pull back to the bundle $\pi^*\cE(w)\to \mathbb{P}(\cT^*)$.
Then
$$
\ce(w,w'):= \pi^*\cE(w) \otimes \cE_{\cT^*}(w').
$$

Using these tools we can recover the metric of \cite{DM}:
\begin{theo}\label{metric} 
  There is a neutral signature metric on $\mathcal{M}\setminus \mathcal{Z}(\tau)$ determined by the canonical pairing of the horizontal and vertical subspaces
of $T(\cT^*)$. This metric is Einstein, with non--zero Ricci scalar, and 
agrees with (\ref{main_metric}).
\end{theo}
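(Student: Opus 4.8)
The plan is to realise the metric intrinsically as the symmetrisation of the canonical dual pairing between the horizontal and vertical distributions of $T\mathbb{P}(\cT^*)$, and then to match the resulting expression with (\ref{main_metric}) in a coordinate frame adapted to a choice of $\nabla\in[\nabla]$. The conceptual construction is clean; the only genuine work is the coordinate identification.

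First I would describe the two distributions over a point $p\in\mathcal{M}\setminus\mathcal{Z}(\tau)$ lying above $x\in N$ and corresponding to a line $\ell\subset\cT^*_x$. Since the tractor connection (\ref{tractor_con}) induces an Ehresmann connection on $\mathbb{P}(\cT^*)$, it determines a horizontal lift and hence a canonical isomorphism $H_p\cong T_xN$, while the vertical space is $V_p=\mathrm{Hom}(\ell,\cT^*_x/\ell)$. The condition $\tau\neq 0$ is exactly the statement that $X|_\ell\neq 0$, so $\ell\cap T^*N(1)_x=0$ and the composition sequence (\ref{co-euler}) splits as $\cT^*_x=\ell\oplus T^*N(1)_x$. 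Using $X$ to identify $\ell\cong\cE(1)_x$ then gives $V_p\cong\mathrm{Hom}(\cE(1)_x,T^*N(1)_x)\cong T^*_xN$. The metric is defined to be the symmetrisation of the tautological pairing $H_p\times V_p\cong T_xN\times T^*_xN\to\R$ (its antisymmetrisation producing the symplectic form). This is manifestly available precisely on $\mathcal{M}\setminus\mathcal{Z}(\tau)$, and since it is the symmetrisation of a perfect pairing of an $n$--plane with its dual it is non--degenerate of neutral signature $(n,n)$; this disposes of the signature claim with no computation.

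Next I would verify agreement with (\ref{main_metric}). Choosing $\nabla\in[\nabla]$ splits $\cT^*=\cE(1)\oplus T^*N(1)$, and on $\mathcal{M}\setminus\mathcal{Z}(\tau)$ the functions $x^i$ together with $\xi_i=\mu_i/\sigma$ trivialise $\mathcal{M}$ and recover the diffeomorphism $T^*N\to M$. In this frame I would read off the horizontal lift from (\ref{tractor_con}) and write the vertical identification $V_p\cong T^*_xN$ explicitly. The Christoffel and Schouten contributions in (\ref{main_metric}) come directly from the connection terms in (\ref{tractor_con}), while the quadratic term $\xi_i\xi_j$ is produced by the $\xi$--dependence of the splitting $\cT^*_x=\ell\oplus T^*N(1)_x$: the line $\ell$, and with it the identification of $V_p$ with $T^*_xN$, varies as one moves along the fibre, and differentiating this identification against the horizontal distribution generates the $\odot$--square of $\xi_i\,\d x^i$. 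Writing the invariant pairing in the coordinate coframe $(\d x^i,\d\xi_i)$ then reproduces (\ref{main_metric}), the apparent horizontal--horizontal terms appearing because $\d\xi_i$ differs from the invariant vertical coframe by horizontal connection one--forms.

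I expect the bookkeeping in this last step to be the main obstacle: one must track the density weights carried by $X$, $U$ and $\tau$, and distinguish the fibrewise Euler direction of $\kappa:\cT^*\to\mathcal{M}$ from the coordinate direction $\partial_\sigma$, since it is precisely this distinction that governs whether the naive slice $\sigma=1$ sees the $\xi_i\xi_j$ term. The cleanest route is to carry out the computation invariantly on $\cT^*$ in terms of the weighted tautological section $U$ and the canonical tractor $X$, contract with the tractor pairing $\cT^*\otimes\cT\to\O$, and only then use $\tau\neq 0$ to normalise the weight and descend to $\mathcal{M}$. Finally, since the metric agrees with (\ref{main_metric}), the Einstein condition and the non--vanishing of the Ricci scalar are inherited from the construction of \cite{DM} in dimension four and \cite{DW20} in general dimension, where these properties are established; alternatively they may be checked directly from (\ref{main_metric}).
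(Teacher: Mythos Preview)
Your proposal is correct and follows essentially the same route as the paper: split $T\mathbb{P}(\cT^*)$ into horizontal and vertical using the tractor connection, identify the vertical subspace with $\pi^*T^*N$ (up to a density weight) away from $\mathcal{Z}(\tau)$ via the splitting of the co--euler sequence, define $g$ and $\Omega$ as the symmetrised and antisymmetrised dual pairing divided by $\tau$, and then check the coordinate form. The paper simply carries out your last step explicitly: it writes the horizontal lift to $\cT^*$ in the fibre coordinates $(V_0,\dots,V_n)$, pushes forward under $\xi_i=V_i/V_0$ to obtain $h_i=\partial/\partial x^i-(\Rho_{ij}+\xi_i\xi_j-\Gamma^k_{ij}\xi_k)\,\partial/\partial\xi_j$, and verifies $g(v^i,h_j)=\delta^i{}_j$; this makes transparent that the $\xi_i\xi_j$ term arises from the entry $\gamma^j_{i0}=\delta^j_i$ of the tractor connection combined with the quotient rule for $\xi_i=V_i/V_0$, which is the concrete form of your ``varying splitting'' explanation. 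The Einstein claim is deferred to \cite{DM}, exactly as you propose.
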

\begin{proof}
 Considering first the total space $\cT^*$ and then its tangent
 bundle, note that there is an exact sequence
  \begin{equation}\label{TM}
0\to \pi^* \cT^*\to T(\cT^*)\to \pi^*TN\to 0,
  \end{equation}
  where we have identified $\pi^* \cT^*$ as the vertical sub-bundle of $T(\cT^*)$.
A connection on the vector bundle $\cT^*\to N$ is equivalent to a
splitting of this sequence; a connection identifies $\pi^*TN$ with a distinguished  sub-bundle of horizontal subspaces in 
$ T(\cT^*)$.
Thus, in particular, the projective tractor
connection on $\cT^*\to N$ gives a canonical splitting of the sequence (\ref{TM}).
So we have 
\begin{equation}\label{HV}
T(\cT^*)=  \pi^*TN\oplus \pi^* \cT^* .
\end{equation}

We move now to the total
space of $\mathbb{P} {\cT^*}$, and we note that again the tractor
(equivalently, Cartan) connection determines a splitting of the
tangent bundle $T(\mathbb{P} {\cT^*})$, see \cite{CGH-duke}. From the usual Euler sequence
of projective space (or see (\ref{useful}) in the last Section) it
follows that for $T(\mathbb{P} {\cT^*})$ the second term of the
display (\ref{HV}) is replaced by a quotient of $\pi^* \cT^*(0,1)$.  Indeed, if
we work at a point $p\in \mathbb{P}(\cT^*)$, observe that $\pi^*
\cT^*(0,1)$ has a filtration
\begin{equation}\label{quotient}
0\to \cE (0,0)_p\stackrel{U_p}{\to}  \pi^* \cT^*(0,1)|_p \to  \pi^* \cT^*(0,1)|_p/\langle U_p \rangle \to 0
\end{equation}
where, as usual, $U$ is the canonical section. 
But away from $\mathcal{Z}(\tau )$, we have that  $U$ canonically splits 
the appropriately re-weighted pull back of  the sequence (\ref{co-euler})
$$
0\to \pi^* T^*N(1,1)   \to \pi^* \cT^*(0,1) \stackrel{X/\tau}{\to} \cE(0,0) \to 0 .
$$
This identifies the quotient in (\ref{quotient}), and thus we have canonically
$$
T(\mathbb{P}(\cT^*)\setminus \mathcal{Z}(\tau))=  \pi^*TN\oplus \pi^*T^*N(1,1).
$$
It follows that on  $\mathcal{M}$
there is canonically a metric $\boldsymbol{g}$ and symplectic form $\boldsymbol{\Omega}$ taking values in $\cE(1,1)$, given by
\begin{eqnarray*}
\boldsymbol{g}(w_1,w_2)&=& \frac{1}{2}\Big(
\pp_H(w_1)\hook \pp_V(w_2)+\pp_H(w_2)\hook\pp_V(w_1)\Big) \quad \mbox{and} \\
\boldsymbol{\Omega}(w_1,w_2)&=& \frac{1}{2}\Big(\pp_H(w_1)\hook \pp_V(w_2)-\pp_H(w_2)\hook\pp_V(w_1)\Big)
\end{eqnarray*}
where
$$
\pp_H: T(\mathcal{M}\setminus \mathcal{Z}(\tau))\to \pi^*TN \quad \mbox{and} \quad \pp_V: T(\mathcal{M}\setminus \mathcal{Z}(\tau))\to \pi^* T^*N(1,1)
$$
are the projections.
Then we obtain the metric and symplectic form by
\be
\label{almost_there}
g:=\frac{1}{\tau}\boldsymbol{g} \qquad \mbox{and} \qquad \Omega:=\frac{1}{\tau}\boldsymbol{\Omega} .
\ee
What remains to be done, is to show that (\ref{almost_there}) agrees
with the normal form (\ref{main_metric}) once a trivialisation of
$\cT^*\rightarrow N$ has been chosen.

Let $p\in N$ and let ${\mathcal W}\subset N$ be an open 
neighbourhood of $p$ with 
local coordinates $(x^1, \dots, x^n)$ such that
$T_pN=\mbox{span}(\p/\p x^1, \dots, \p/\p x^n)$. The connection 
(\ref{tractor_con}) gives a splitting of $T(\cT^*)$ into the horizontal and
vertical sub-bundles
\[
T(\cT^*)=H(\cT^*)\oplus V(\cT^*),
\]
as in (\ref{HV}).
To obtain the explicit form of this splitting, let $V_\alpha, \alpha=0, 1, \dots, n$ be components of a local section of $\cT^*$ in the trivialisation over ${\mathcal{W}}$.
Then
\[
\nabla^{\cT^*} V_\beta=d V_\beta-\gamma_{\beta}^{\alpha} V_\alpha,
\]
where $\gamma_{\alpha}^\beta= \gamma_{i\alpha}^\beta dx^i$, and the components
of the co-tractor connection 
$\gamma_{i\alpha}^\beta$  are given in terms of the connection
$\nabla$ on $N$, and its Schouten tensor, and 
can be read--off from (\ref{tractor_con}):
\[
\gamma_{i0}^0=0, \quad \gamma_{i0}^j=\delta_i^j,\quad
\gamma_{ij}^k=\Gamma_{ij}^k, \quad \gamma_{ij}^0=-\Rho_{ij}.
\]
In terms of these components we can write
\[
H(\cT^*)=\mbox{span}
\Big( \frac{\p}{\p x^i}+ {\gamma_{i\alpha}^\beta} V_\beta
\frac{\p}{\p V_{\alpha}}, i=1, \dots, n \Big),
\quad V(\cT^*)=\mbox{span}\Big(\frac{\p}{\p V_{\alpha}}, \alpha=0, 1, 
\dots, n\Big).
\]
Setting $\xi_i=V_i/V_0$, where $\tau=V_0\neq 0$ 
on the complement of 
$\mathcal{Z}(\tau)$, 
  we can compute the push forwards
of these spaces to $\mathbb{P}(\cT^*)\setminus {\mathcal{Z}(\tau)}$:
\[
\kappa_* H(\cT^*)=\mbox{span}\Big(h_i\equiv
\frac{\p}{\p x^i}-
(\Rho_{ij}+\xi_i\xi_j  -\Gamma_{ij}^k\;\xi_k)\frac{\p}{\partial \xi_j}
\Big), \quad \kappa_* V(\cT^*)=\mbox{span}\Big(v^i\equiv\frac{\p}{\p \xi_i}
\Big).
\]
The non--zero components of the  metric (\ref{almost_there}) are given by
\[
g(v^i, h_j)={\delta^i}_j
\]
which indeed agrees with (\ref{main_metric}) which is known to be Einstein
\cite{DM}.
\end{proof}

Next we observe that $\mathbb{P}(\cT^*)\setminus \mathcal{Z}(\tau)$ is an affine bundle modelled on $T^* N$.
The point is that given
  $\nabla$ in the projective class there is a  there is a smooth fibre bundle isomorphism
  \begin{equation}\label{key-id}
\iota : T^* N\to \mathbb{P}(\cT^*)\setminus \mathcal{Z}(\tau).
    \end{equation}
First, given  $\nabla$, we can represent an element $U\in
  \cT^*_p$ ($p\in N$) by the pair $(\tau , \mu) \in \cE (1)_p\oplus
  T_p^*N(1)$,  or, if we choose coordinates on $N$, by collection
\be
\label{tractor_U}
U=(\tau, \mu_i), \quad i=1, \dots, n .
\ee
  Then, dropping the choice $\nabla \in [\nabla]$, $U\in
  \cT^*_p$ is an equivalence class of such pairs by the equivalence
  relation (\ref{ttrans}) that covers the
  equivalence relation between elements of $[\nabla]$. 

  Thus, given $\nabla$, and  from the naturality of all maps, it
  follows that the total space of $T^*N$ can be identified with $\mathbb{P}(\cT^*)\setminus \mathcal{Z}(\tau)$
  by (for each $p\in N$)
\begin{equation}
T_p^*N\ni \xi_i  \mapsto [(1,\xi_i)]=[(\tau,\tau \xi_i)]\in
\mathbb{P}(\cT^*)\setminus \mathcal{Z}(\tau) .
\end{equation}

Thus we may view $\mathcal{M}$ as a
compactification of $T^*N$  and, by construction,
this is a closed manifold iff $N$ is closed.

Note that by this construction it is easily verified that the zero
locus of $\tau$ is a smoothly
embedded hypersurface in $\mathcal{M}$,  and from (\ref{co-euler}) it follows
at once that this may be identified with the total space of the
fibrewise projectivisation $\mathbb{P}(T^*N)$ (which is well known to
have a para-CR structure). 

A feature of this construction is that in each dimension $n$ (of $N$)
either the hypersurface $\mathcal{Z}(\tau)$ (if $n$ odd) is not
orientable, or $\mathcal{M}$ (if $n$ even) is not orientable.

\subsubsection{Compactification by ray-projectiviation:}
\label{ray_sub}
Instead we may follow the construction above but instead define
$\mathcal{M}:= \pi^*(\mathbb{P}_+(\cT^*))$, where $\mathbb{P}_+(\cT^*)$ is
the ray-projectivisation of $\cT^*$ (i.e.\ the fibres of $\cT^*\to
\mathbb{P}_+(\cT^*)$ are isomorphic to $\mathbb{R}_+$). Then the bundles $\ce(w,w')$
should also be defined via ray homogeneity.
In this case, for $N$ orientable,
both $\mathcal{Z}(\tau)$ and $\mathcal{M}$ are orientable, and again
$\mathcal{M}$ is closed iff $N$ is. Now from (\ref{co-euler}) we have that
$\mathcal{Z}(\tau)$ may be identified with the
fibrewise ray-projectivisation $\mathbb{P}_+(T^*N)$.
In this variant of the
construction there are two copies $M_{\pm}$ of $T^*N$ in $\mathcal{M}$
according to the sign of $\tau$. Moreover each of $\mathcal{M}\setminus
M_{\mp}$ is a manifold that is globally a para-c-projective
compactification of $M_{\pm}$ in sense of Theorem \ref{CGthm}.

\subsubsection{Remark on continuing the tractor approach}
It would be possible to achieve our main aims by continuing the
tractor approach.  We will not pursue this here as we want to
emphasise that with little effort the main result now follows directly
form the properties of the metric.  However we sketch just the basic idea: By
our construction above it follows that $\mathcal{M}$ has a canonical
para-c-projective geometry. In the notation as above, $\pi^*{\cT}\oplus
\pi^*{\cT^*}$ is the corresponding para-c-projective tractor bundle
and this has a canonical tractor connection that trivially
extends (in fibre directions) the pull back of the projective
connection (that is available in horizontal directions). The
dual pairing between $\pi^*{\cT}$ and $\pi^*{\cT^*}$ determines a
fibre metric and compatible symplectic form on the bundle
$\pi^*{\cT}\oplus \pi^*{\cT^*}$ and this is obviously preserved by the
connection. What remains is to show that the tractor connection so
constructed satisfies properties that mean that it is {\em normal} in
the sense defined in e.g.\ \cite{CS-book}. With this established then
the main results then follow from the general holonomy theory in
\cite{CGH-duke}.


\subsection{The main theorem}
In the previous subsection \ref{ray_sub} we have presented a candidate 
$\overline{M}\equiv\mathcal{M}\setminus M_{\mp}$ for a para-c-projective
compactification of the Einstein para-Kahler manifold $(M, g, \Omega)$ given 
by (\ref{main_metric}). What remains to be done is to show that near
the boundary ${\mathcal{Z}}(\tau)=0$ of $\overline{M}$ the metric 
(\ref{main_metric}) can be put in the local normal form of Theorem 
\ref{CGthm}.

The endomorphism $J:TM\rightarrow TM$ defined by
$\Omega(X, Y)=g(JX, Y)$ satisfies $J^2=Id$, and the associated Libermann connection $\nabla^{L}$ \cite{Lieb} is given by
\be
\label{lib}
{\nabla^L}_a X_b={\nabla^{\bf g}}_a X_b-{G^c}_{ab} X_c,\quad \mbox{where}\quad
{{G^c}_{ab}}=-{{\Omega}^{cd}}{\nabla^{\bf g}}_d {\Omega_{ab}}
\ee
and $\nabla^{\bf g}$ is the Levi--Civita connection of $g$. This connection is metric, has minimal torsion, and preserves the almost para--complex structure $J$. It thus belongs to a para--$c$--projective equivalence class which we will show to be compactifiable in the sense of Theorem 
\ref{CGthm}.

\begin{theo}
\label{our_thm}
The Einstein almost para--K\"ahler metric $(M, g, \Omega)$ given by 
(\ref{main_metric}) admits a para--$c$--projective compactification
$\overline{M}$. The structure on the
$(2n-1)$--dimensional boundary $\p M\cong \mathbb{P}(T^* N)$ of $\overline{M}$ includes a contact structure together with a conformal structure 
and a para--CR structure
defined on the contact distribution.
\end{theo}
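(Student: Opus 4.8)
The plan is to deduce the result directly from Theorem \ref{CGthm}, whose hypotheses I will verify for the explicit metric (\ref{main_metric}) on the candidate compactification $\ov{M}=\mathcal{M}\setminus M_{\mp}$ with boundary $\p M=\mathcal{Z}(\tau)\cong\PP(T^*N)$ constructed above. Admissibility is already in hand: $J$ defined by $\Omega(X,Y)=g(JX,Y)$ squares to the identity, $g$ is para--Hermitian, and the Libermann connection $\nabla^L$ of (\ref{lib}) preserves $g$ and $J$ and has minimal torsion, so the relevant para--$c$--projective class is the one of Definition \ref{defi_1}. It therefore remains to produce, near each boundary point, a local defining function $T$ and a constant $C$ for which $h_{T,C}=Tg+\tfrac{C}{T}(dT^2-\theta^2)$ extends smoothly across $T=0$ and satisfies the contact limit in the second bullet of Theorem \ref{CGthm}, and to check the Nijenhuis condition (\ref{Nijenhuis_condition}).

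First I would fix the defining function. Since $\p M$ is reached as the fibre coordinate tends to infinity, I would work near a boundary point where (after a linear change of frame) $\xi_1\to\infty$ and pass to the smooth boundary coordinates $T=1/\xi_1$ and $u_k=\xi_k/\xi_1$ ($k=2,\dots,n$), so that $\xi_1=1/T$, $\xi_k=u_k/T$, and $\p M$ is $T=0$. In this chart $J$ acts as $-1$ on the horizontal span of the $h_i$ and as $+1$ on the vertical span of the $v^i$, which lets me compute $\theta=dT\circ J$ explicitly. Writing $B_{ij}=\Rho_{ij}+\xi_i\xi_j-\Gamma^k_{ij}\xi_k$, so that $\pi_i=d\xi_i+B_{ij}\,dx^j$ is the vertical coframe form dual to $v^i$, one finds $dT=-T^2\,d\xi_1$ and $\theta=-T^2\,d\xi_1-2T^2 B_{1i}\,dx^i$; hence $dT-\theta=2T^2 B_{1i}\,dx^i$ is horizontal while $dT+\theta=-2T^2\pi_1$ is a multiple of $\pi_1$. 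The essential feature of (\ref{main_metric}) is that the quadratic term $-\xi_i\xi_j\,dx^i\odot dx^j$ makes $g$ blow up like $T^{-2}$ relative to a frame extending to $T=0$; the point of the counterterm is that $\tfrac{C}{T}(dT^2-\theta^2)$ carries a matching $T^{-1}$ singularity built from exactly the same $\xi_i\xi_j$ data.

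The central computation, and the main obstacle, is to expand $Tg$ and $\tfrac{C}{T}(dT^2-\theta^2)$ in the smooth coframe $(dx^i,dT,du_k)$ and show that the leading $T^{-1}$ terms cancel. Setting $\psi=\zeta_i\,dx^i$ with $\zeta_1=1$ and $\zeta_k=u_k$, and using $B_{1i}\sim\zeta_i/T^2$, the singular part of $Tg$ is $\tfrac1T(\psi^2-dT\odot\psi)$, while that of $\tfrac{C}{T}(dT^2-\theta^2)=\tfrac{C}{T}(dT-\theta)\odot(dT+\theta)$ is $-\tfrac{4C}{T}(\psi^2-dT\odot\psi)$. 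Thus the unique choice $C=\tfrac14$ removes the pole, and $h_{T,1/4}$ extends smoothly to $T=0$. I expect the bookkeeping of the subleading Schouten and Christoffel contributions — needed to confirm genuine smoothness rather than mere cancellation of the leading pole, and to evaluate the finite boundary value — to be the fiddly part of the argument. Verifying the second bullet then amounts to identifying $\lim_{T\to0}h_{T,1/4}(X,JY)$ with $\tfrac14\,d\theta(X,Y)$ on the distribution $dT(Y)=\theta(Y)=0$; non--degeneracy of $d\theta$ there is simultaneously the statement that $\theta$ restricts to a contact form on $\p M$ (by the para analogue of Lemma 5 of \cite{CG}). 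The condition (\ref{Nijenhuis_condition}) I would check directly from the explicit $J$: its Nijenhuis tensor takes values in the vertical distribution, and since those fibre directions are tangent to $\PP(T^*N)=\p M$ apart from the $\p/\p\xi_1$ direction that carries $\nabla T$, one verifies that the contraction $\mathcal{N}^a{}_{bc}\nabla_a T$ vanishes at $T=0$.

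Finally, Theorem \ref{CGthm} yields that $g$ is para--$c$--projectively compact, so $\ov{M}$ is the desired compactification. The three structures on $\p M\cong\PP(T^*N)$ then follow. The contact structure is the kernel of $\theta|_{\p M}$ just obtained. The conformal structure on the contact distribution $\D$ is the class of the boundary restriction of $h_{T,1/4}$ to $\D$, well defined because a change of $T$ rescales $\theta$, and hence the induced metric on $\D$, by a nowhere--vanishing function, as recorded after Theorem \ref{CGthm}. The para--CR structure is the restriction to $\D$ of the boundary endomorphism $J$, extended to $\ov{M}$ by $\ov{\nabla^L}$--parallel transport as in Definition \ref{defi_1}, which coincides with the canonical para--CR structure of $\PP(T^*N)$ noted above.
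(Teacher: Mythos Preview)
Your strategy is the same as the paper's: verify the hypotheses of Theorem~\ref{CGthm} for the Libermann connection by exhibiting a local defining function $T$ with $C=\tfrac14$ for which $h_{T,1/4}$ extends smoothly, then check the Levi--form compatibility and the asymptotic tangentiality of $\mathcal N$. Your leading--order cancellation is correct and your observation that $\mathcal N$ takes values in the vertical (the $+1$ eigenbundle of $J$) is the right structural reason behind (\ref{Nijenhuis_condition}).

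The genuine difference is the choice of defining function and chart. The paper takes the mixed base--fibre function $T=1/(\xi_i x^i)$ and the Pfaff coordinates $(T,\,Z_A=\xi_A/\xi_n,\,X^A=x^A,\,Y=x^n)$; it first treats the flat model, where one obtains exactly $g=\tfrac{\theta^2-dT^2}{4T^2}+\tfrac{1}{T}h$, and then redoes the computation with the $\Gamma^k_{ij}$ and $\Rho_{ij}$ switched on, obtaining the explicit boundary metric (\ref{met_th}) on the contact distribution and checking the second bullet and (\ref{Nijenhuis_condition}) from the explicit formula (\ref{J_T=0}) for $J|_{T=0}$. Your purely fibrewise $T=1/\xi_1$ with $u_k=\xi_k/\xi_1$ is arguably more natural---in the tractor trivialisation it is literally $\tau$ on the chart $\mu_1\neq 0$---and makes the $C=\tfrac14$ cancellation transparent via the factorisation $dT^2-\theta^2=(dT-\theta)\odot(dT+\theta)$ with one factor horizontal and the other proportional to $\pi_1$. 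The trade--off is that the paper's coordinates are tailored so that the curved corrections enter $h$ and $J|_{T=0}$ in a visibly projectively invariant and smooth way (the $\Rho$--terms come with an extra power of $T$, the $\Gamma$--terms assemble into $\Theta_{AB}$), whereas in your chart you still owe the ``fiddly'' verification that the $\Gamma$-- and $\Rho$--contributions to $h_{T,1/4}$ and to the $\partial_T$--component of $\mathcal N$ really are $O(1)$ and $O(T)$ respectively; this is straightforward once one tracks that $B_{1i}=\zeta_i/T^2-\Gamma^k_{1i}\zeta_k/T+\Rho_{1i}$, but it is not yet done in your sketch.
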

\noindent
{\bf Proof.}
In the proof below we shall explicitly construct the boundary $\p M$ together with the contact structure and the associated conformal structure on the contact distribution. We shall
first deal with the model $M=SL(n+1)/GL(n)$, and then explain how the curvature
of $(N, [\nabla])$ modifies the compactification.

In the model case we can define coordinates $x^i$ on $N=\mathbb{RP}^n$ by taking $X=(1,x^1,\dots,x^n)$, where $(X^0,\dots,X^n)$ are homogeneous coordinates and we are working in an open set where $X^0\neq 0$. The ${x^i}$ are flat coordinates, so the connection components (and hence the Schouten tensor) vanish and (\ref{main_metric}) reduces to
\be
\label{model_metric}
g=d\xi_i\odot dx^i + \xi_i\xi_jdx^i\odot dx^j, \qquad
\Omega=\d \xi_i\wedge \d x^i \quad
\mbox{where}\quad
i, j =1, \dots, n.
\ee
We can relate \cite{DM} the affine coordinates $\xi_i$ on the fibres of $T^*N$ to the tractor coordinates (\ref{tractor_U}) by setting $\xi_i=\mu_i/\tau$ on the complement of the zero locus ${\mathcal Z}(\tau)$ of $\tau$.

Now consider an open set  ${\mathcal U}\subset M$ given
by  $\xi_ix^i>0$, and define the function $T$ on ${\mathcal U}$ by
\be
\label{formula_for_T}
T=\frac{1}{\xi_i x^i}.
\ee
We shall attach a boundary  $\p \mathcal{U}$ to the open set $\mathcal{U}$ 
such that $T$ extends to a function $\overline{T}$ on $\mathcal{U}\cup \p \mathcal{U}$, and
$\overline{T}$ is  the defining function for this boundary.
We then investigate the geometry on $M$ in the limit $T\rightarrow 0$.
It is clear from above that
the zero locus of $\overline{T}$ will be contained in the zero locus $\mathcal{Z}(\tau)$ of $\tau$, and
therefore belongs to the boundary of $\overline{M}$. We will 
use $\overline{T}$ as a defining function for $\overline{M}$ in an open set $\overline{\mathcal{U}}\subset\overline{M}$.
The strategy of the proof is to extend $T$ to a coordinate system on 
$\mathcal{U}$, such that near the boundary the metric $g$ takes a form
as in Theorem \ref{CGthm}.

First define $\theta\in \Lambda^1(\overline{M})$
by 
\be
\label{def_theta}
V\hook \theta=J(V)\hook d T, \quad\mbox{or equivalently}\quad 
\theta_a=\Omega_{ac}g^{bc}{{\nabla}^{\bf g}}_b T, \quad a, b, c=1, \dots, 2n
\ee
where $J$ is the para--complex structure of $(g,  \Omega)$. Using (\ref{model_metric}) this  gives
\[
\theta=2T(1-T)\xi_id x^i-dT.
\]
We need $n$  open sets $U_1, \dots, U_n$ such that $\xi_k\neq 0$ on $U_k$
to cover the zero locus of $T$. Here we chose $k=n$, and adapt
a coordinate system (which we will prove to be Pfaff) given by
\[
(T, Z_1, \dots, Z_{n-1}, X^1, \dots,
 X^{n-1}, Y),
\] 
where $T$ is
given by (\ref{formula_for_T}) and
\[
Z_A=\frac {\xi_A}{\xi_n}, \quad X^A=x^A, \quad Y=x^{n}, \quad\mbox{where}\quad
A=1, \dots, n-1.
\]
We compute
\[
\theta=2(1-T)\frac{dY+Z_AdX^A}{K}-dT, \quad
\xi_n=\frac{1}{KT}, \quad \mbox{where}\quad K\equiv Y+Z_AX^A,
\]
and substitute
\[
\xi_i dx^i=\frac{1}{KT}(dY+Z_AdX^A)
\]
into (\ref{model_metric}). This gives
\be
\label{CG_Form}
g=\frac{\theta^2-dT^2}{4T^2}+\frac{1}{T}h,
\ee
where 
\[
h=\frac{1}{4(1-T)}(\theta^2-dT^2)+\frac{1}{K}\Big(dZ_A\odot dX^A-\frac{1}{2(1-T)}X^A dZ_A\odot(\theta+dT)\Big)
\]
is regular at the boundary $T=0$. This is in agreement with the 
asymptotic form in Theorem \ref{CGthm} (see \cite{CG} for further details).

The restriction $h$ to $\p M$ gives a metric on a distribution ${\mathcal D}=\mbox{Ker} (\theta|_{T=0})$
\be
\label{h000}
\theta|_{T=0}=2\frac{dY+Z_A dX^A}{Y+Z_AX^A}, \quad h_0=\frac{1}{4}{(\theta|_{T=0})}^2+\frac{1}{2(Y+Z_AX^A)}(2dZ_A\odot dX^A-X^AdZ_A\odot(\theta|_{T=0})).
\ee
 Note that $T$ is only defined up to multiplication by a positive function. Changing the defining function in this way results in a conformal rescalling of $\theta|_{T=0}$, thus the metric on the contact distribution is also defined up to an overall conformal scale. We shall choose the scale so that
the contact form is given  by $\theta_0\equiv K\theta|_{T=0}$ on $T(\p M)$,
with the metric on ${\mathcal D}$ given by
\be
\label{on_distri}
h_{\mathcal D}=dZ_A\odot dX^A.
\ee

We now move on to deal with the
curved case where the metric on $M$ is given by 
(\ref{main_metric}).
The coordinate system $(T, Z_A, X^A, Y)$ is as above, and
the one--form $\theta$ in (\ref{def_theta}) is given by
\[
\theta=2T(1-T)\xi_idx^i-dT+2T^2(\Rho_{ij}-\Gamma_{ij}^k\xi_k)x^idx^j,
\]
or in the $(T, Z_A, X^A, Y)$ coordinates,
\[
\begin{split}
\theta=\ 2&(1-T)\frac{Z_AdX^A+dY}{K} - dT \\
+& 2T^2\Bigg[\bigg(\Rho_{AB}-\frac{\Gamma^C_{AB}Z_C+\Gamma^n_{AB}}{TK}\bigg)X^AdX^B 
+\bigg(\Rho_{nB}-\frac{\Gamma^C_{nB}Z_C+\Gamma^n_{nB}}{TK}\bigg)YdX^B \\
+& \bigg(\Rho_{An}-\frac{\Gamma^C_{An}Z_C+\Gamma^n_{An}}{TK}\bigg)X^AdY 
+\bigg(\Rho_{nn}-\frac{\Gamma^C_{nn}Z_C+\Gamma^n_{nn}}{TK}\bigg)YdY\Bigg].
\end{split}
\]

Guided by the formula (\ref{CG_Form}) we define
\[
h=Tg-\frac{1}{4T}(\theta^2-dT^2),
\]
which we find to be
\be
\begin{split}
h=&
\frac{1}{4(1-T)}(\theta^2-dT^2)+\frac{1}{K}\Big(dZ_A\odot dX^A-\frac{1}{2(1-T)}
X^A dZ_A\odot (\theta+dT)\Big)\\
-&\frac{1}{K}\Big(
(\Gamma_{AB}^CZ_C+\Gamma_{AB}^n)dX^A\odot dX^B+
(\Gamma_{nn}^CZ_C+\Gamma_{nn}^n)dY\odot dY+
2(\Gamma_{An}^CZ_C+\Gamma_{An}^n)dX^A\odot dY\Big)\\
+&T(\Rho_{AB}dX^A\odot dX^B+2\Rho_{An}dX^A\odot dY+\Rho_{nn}dY\odot dY).
\end{split}
\ee
This is  smooth as $T\rightarrow 0$.

Restricting $h$ to $T=0$ yields a metric which differs from
(\ref{h000}) by the curved contribution given by the components of the  connection, but not the Schouten tensor. Substituting $dY=K\theta|_{T=0}/2-Z_AdX^A$, disregarding the terms involving $\theta|_{T=0}$ in $h$, and conformally rescalling by 
$K$ yields the metric
\begin{eqnarray}
\label{met_th}
h_{\mathcal D}&=&(dZ_A-\Theta_{AB}dX^B)\odot dX^A,\quad
\mbox{where}\\
\Theta_{AB}&=&\Gamma_{AB}^CZ_C+\Gamma_{AB}^n+
(\Gamma_{nn}^CZ_C+\Gamma_{nn}^n)Z_AZ_B-
2(\Gamma_{An}^CZ_C+\Gamma_{An}^n)Z_B\nonumber
\end{eqnarray}
defined on the contact distribution ${\mathcal D}=\mbox{Ker}(\theta_0)$, 
where $\theta_0=2(dY+Z_AdX^A)$.

We now invoke Theorem \ref{CGthm},  verifying
by explicit computation that the remaining two conditions are satisfied. The first of these conditions is that the metric $h_{\mathcal{D}}$ is compatible with the  
Levi--form of the almost para--CR structure induced on $\p M$ by $J$, i.e.
\be
\label{boundary_compatibility}
h_{\mathcal D}(X, Y)=d\theta_0(JX, Y), \quad\mbox{for}\quad X\in\mathcal{D}.
\ee
The second is that the Nijenhuis tensor takes asymptotically tangential values, i.e. that (\ref{Nijenhuis_condition}) is satisfied.

Both of these follow from computing the para--complex structure $J$ in the $(T, Z_A, X^A, Y)$ coordinates. We find
\be
\begin{split}
\label{J_T=0}
J|_{T=0}=\ &-\frac{\p}{\p X^A}\otimes dX^A +\frac{\p}{\p Y}\otimes dY + \frac{\p}{\p Z_A} \otimes dZ_A + \frac{\p}{\p T}\otimes dT \\
&-\frac{Z_B}{K}\frac{\p}{\p T}\otimes dX^B - \frac{1}{K}\frac{\p}{\p T}\otimes dY \\
&-\big(\Gamma^D_{AB}Z_D+\Gamma^n_{AB}\big)\frac{\p}{\p Z_A} \otimes dX^B + \big(\Gamma^D_{nB}Z_D+\Gamma^n_{nB}\big)Z_C\frac{\p}{\p Z_C} \otimes dX^B \\
&-\big(\Gamma^D_{An}Z_D+\Gamma^n_{An}\big)\frac{\p}{\p Z_A} \otimes dY
+\big(\Gamma^D_{nn}Z_D+\Gamma^n_{nn}\big)Z_C\frac{\p}{\p Z_C} \otimes dY.
\end{split}
\ee
Restricting to vectors in $\mathcal{D}$ amounts to substituting $dY=\theta_0/2-Z_AdX^A$ and disregarding the terms involving $\theta_0$ as above, so that
\[
\begin{split}
J|_{\mathcal{D}}= &-\frac{\p}{\p X^A}\otimes dX^A +Z_A\frac{\p}{\p Y}\otimes dX^A + \frac{\p}{\p Z_A} \otimes dZ_A + \frac{\p}{\p T}\otimes dT \\
&-\frac{2Z_B}{K}\frac{\p}{\p T}\otimes dX^B -\Theta_{AB}\frac{\p}{\p Z_A}\otimes dX^B
\end{split}
\]
and (\ref{boundary_compatibility}) is satisfied.

For the Nijenhuis condition, we use the formula
\[
\mathcal{N}^a_{bc}=J^d_{\ [b}\p_{|d|}J^a_{\ c]}-J^d_{\ [b}\p_{c]}J^a_{\ d}.
\]
Note that we need only consider components of this with $a=T$, and thus only need to work with the $\p/\p T$ components of $J$ to find the terms which look like $\p J$. This is a one--form which we shall call $J^{(T)}$ and find to be
\[
\begin{split}
J^{(T)}=&\bigg(-\frac{Z_B}{K} + \frac{T[2Z_B + (\Gamma^D_{AB}Z_D+\Gamma^n_{AB})X^A + (\Gamma^D_{nB}Z_D+\Gamma^n_{nB})Y]}{K} - T^2[\Rho_{AB}X^A+\Rho_{nB}Y]\bigg)dX^B \\
&\bigg(-\frac{1}{K} + \frac{T[2 + (\Gamma^D_{An}Z_D+\Gamma^n_{An})X^A + (\Gamma^D_{nn}Z_D+\Gamma^n_{nn})Y]}{K} - T^2[\Rho_{An}X^A+\Rho_{nn}Y]\bigg)dY.
\end{split}
\]
Note that this agrees with (\ref{J_T=0}) when $T=0$. We use it to calculate ${\mathcal{N}^{a}}_{bc}\nabla_a T$, dropping terms which vanish when $T=0$ to verify (\ref{Nijenhuis_condition}).

\koniec

\subsection{Two--dimensional projective structures}
In the case if $n=2$ the coordinates on $\p M$ are $(X, Y, Z)$,  and (\ref{met_th}) yields
\[
h_{\mathcal D}=dZ\odot dX-[\Gamma_{11}^2+(\Gamma_{11}^1-2\Gamma_{12}^2)Z+(\Gamma_{22}^2-2\Gamma_{12}^2)Z^2+
\Gamma_{22}^1Z^3]dX\odot dX,
\]
which is transparently invariant under the projective changes 
\[
\Gamma_{ij}^k\longrightarrow \Gamma_{ij}^k+\delta^k_i\Upsilon_j+\delta^k_j\Upsilon_i
\]
of $\nabla$.
In the  two-dimensional case the 
projective
structures $(N, [\nabla])$ are equivalent to second order ODEs which are cubic in
the first derivatives (see, e.g. \cite{BDE})
\begin{equation}
\label{ODE}
\frac{d^2 Y}{d X^2}=\Gamma^1_{22}\Big(\frac{d Y}{d X}\Big)^3
+(2\Gamma^1_{12}-\Gamma^2_{22})\Big(\frac{d Y}{d X}\Big)^2
+(\Gamma^1_{11}-2\Gamma^2_{12})\Big(\frac{d Y}{d X}\Big)-
\Gamma^2_{11},
\end{equation}
where the integral curves of (\ref{ODE}) are the unparametrised geodesics of $\nabla$. 
The integral curves $C$ of $(\ref{ODE})$ are integral submanifolds
of a  differential
ideal ${\mathcal I}=<\theta_0, \theta_1>$, where
\[
\theta_0=dY+ZdX, \quad \theta_1=dZ-\Big(\Gamma_{11}^2+(\Gamma_{11}^1-2\Gamma_{12}^2)Z+(\Gamma_{22}^2-2\Gamma_{12}^1)Z^2+
\Gamma_{22}^1Z^3\Big)dX
\]
are one--forms on a three--dimensional manifold $B=\PP(T^*N)$ with local coordinates $(X, Y, Z)$. If $f:C\rightarrow B$ is an immersion, then $f^*(\theta_0)=0, f^*(\theta_1)=0$ is equivalent
to (\ref{ODE}) as long as $\theta_2\equiv dX$ does not vanish. In terms of these three one--forms
the contact structure, and the metric on the contact distribution are given by
$
\theta_0,  h_{\mathcal D}=\theta_1\odot\theta_2.
$

\subsection{The model via an orbit decomposition}

In this section we describe here the flat (in the sense of parabolic
geometries) model \cite{CDT13, DM} of our construction in tractor
terms.

The flat projective structure on $N=\RP^n$ gives rise to 
the neutral signature para--K\"ahler Einstein metric on $M=SL(n+1)/GL(n)$
\be
\label{DM_metric}
g=d\xi_i\odot dx^i+(\xi_i dx^i)^2, \quad \Omega= d\xi_i\wedge dx^i, \quad\mbox{where}\quad
i, j, \dots =1, \dots, n.
\ee
In \cite{DM}, \S 7.1 it was explained how this homogeneous model
corresponds to the projectivised co-tractor bundle of $\RP^n$, with
an $\RP_{n-1}$ removed from each $\RP_n$ fiber. This $\RP_{n-1}$
corresponds to incident pairs of points and hyperplanes in $\R^{n+1}\times\R_{n+1}$.

Here we shall instead take $N$ to be the sphere $S^n$ with its standard
projective structure as this is orientable in all dimensions and, more
importantly, on this (double cover of $\mathbb{R}\mathbb{P}^n$) the
tractor bundle is trivial, and this simplifies the discussion.  The
underlying space of the (compactified) model of dimension $2n$ is
$S^n\times S_n$ where both $S_n$ and $S^n$ denote spheres that are
dual as we shall explain.

Consider first two vector spaces each isomorphic to $\mathbb{R}^{n+1}$:
$$
V\cong \mathbb{R}^{n+1} \qquad \mbox W \cong \mathbb{R}^{n+1}
$$
and view each as a representation space for an $SL(n+1,\mathbb{R})$
action.  So $G:= SL(V)\times SL(W)$ acts on $V\times W$. (Note that we may
wlog consider $V$ and $W$ as respectively the $\pm 1$ eigenspaces of
the single vector space $\mathbb{V}:=V\oplus W$ equipped with a
$\mathbb{J}$ s.t. $\mathbb{J}^2=1$.)

Now the action of $SL(V)$ descends to a transitive action on the ray
projectivisation $\mathbb{P}_+(V)$ and similarly $SL(W)$ acts
transitively on $\mathbb{P}_+(W)$. Thus  $G:= SL(V)\times SL(W)$ acts transitively on the manifold
$$
{\mathcal M}:= \mathbb{P}_+(V) \times \mathbb{P}_+(W).
$$
We can represent an element of $\mathcal{M}$ in terms of pairs of homogeneous coordinates
$([Y],[Z])$ where $0\neq Y\in V$ and $0\neq Z\in W$.

Note that as a smooth manifold $\mathcal{M}=S^n\times S^n$, but as a homogeneous manifold it is
$$
G/P = \big( SL(V)/P_X \big)\times \big( SL(W)/P_U \big)
$$
where $P_X$ (resp.\ $P_U$) is the parabolic subgroup in $SL(V)$
that stabilises a point $[X]$ in $\mathbb{P}_+(V)$ (resp.\ $[U] \in \mathbb{P}_+(W)$
), and $P$ is the group product $P_X\times P_W$ which itself is a
parabolic subgroup of the semisimple group $G$.

Now introduce an additional structure which breaks the $G$
symmetry. 
Namely we fix an isomorphism
$$
I:W\to V^*
$$
where $V^*$ denotes the dual space to $V$. The subgroup $H\cong SL(n+1,\mathbb{R})$ of $G$
that fixes this may be identified with $SL(V)$ which acts on a pair
$(Y,Z)\in V\times V^*$ by the defining representation and on the first
factor and by the dual representation on the second factor.

Given this structure we may now (suppress $I$ and) view ${\mathcal{M}}$ as
consisting of pairs $([X],[U])$ where $0\neq X\in V$ and $0\neq U\in
V^*$. That is 
$$
{\mathcal{M}}= \mathbb{P}_+(V) \times \mathbb{P}_+(V^*).
$$

This is useful as follows: Each element $[U]$ in
$\mathbb{P}_+(V^*)$ determines an oriented  hyperplane in $V$ and each $[X]\in
\mathbb{P}_+(V)$ an oriented line in $V$.  So now we consider the $H$
action on $M$. This has two open orbits and a closed orbit. The last
is the incidence space 
$$
\mathcal{Z}=\{ ([X],[U])\in \mathcal{M} \mid U(X)=0 \} 
$$
which sits as smooth orientable separating hypersurface in $\mathcal{M}$. Then there are the open orbits
$$
M_+=\{ ([X],[U])\in \mathcal{M} \mid U(X)>0 \} \qquad \mbox{and} \qquad
M_-=\{ ([X],[U])\in \mathcal{M} \mid U(X)<0 \}.
$$
We may think of $\mathcal{Z}$ as the `boundary' (at infinity) for the open orbits $M_\pm$.

We now describe the geometries on the orbits. The claim is that there
are Einstein metrics in $M_\pm$, while $\mathcal{Z}$ is well known as
the model for so-called contact Langrangian (or sometimes called
para--CR) geometry, this is a real analogue of hypersurface type CR
geometry.

First observe that $N_V:=\mathbb{P}_+(V)$
is the flat model of projective
geometry. So in particular we have
$$
0\to \ce_V(-1)\stackrel{X}{\to}\cT_V\to TN_V(-1)\to 0
$$
where $\cT_V$ is the projective tractor bundle on $N_V$ and $X$ is the
tautological section of $\cT(1)$, which coincides with the canonical tractor.
Similarly there a sequence on
$N^W:= \mathbb{P}_+(V^*)$
\begin{equation}\label{useful}
0\to \ce^W(-1)\stackrel{U}{\to}\cT^W \to TN^W(-1)\to 0 .
\end{equation}

There is a natural tractor bundle $\mathcal{T}:=\cT_V\oplus \cT^W $ on $M$. 
Where $X$ and $U$ are not incident this induces a metric on $M$ as
follows. Observe that, at a point $([X],[U])$ where $X\hook U \neq 0$, the  tractor field  $U$ splits the first sequence by $\nu\in \Gamma (\ce(-1,0))$ defined by
$$
\nu:=U/\tau
$$
with $\tau:=X\hook U$ (and where we have used an obvious weight
notation). This follows as $X\hook \nu=1$. Similarly
$$
x:=X/\tau \in \Gamma (\ce(0,-1))
$$
splits the second short exact sequence because $x \hook U=1$. Thus we
obtain a neutral signature metric on $TN_V\oplus TN^W$ by these
two steps: First, using  these splittings yields a bundle
monomorphism
$$
TN_V(-1,0)\oplus TN^W(0,-1) \to \cT_V\oplus \cT^W .
$$ Second, this gives a symmetric form $\boldsymbol{g}$ and symplectic form
$\boldsymbol{\Omega}$ on $TN_V(-1,0)\oplus TN^W(0,-1)$ by then using
the canonical metric and symplectic form on $\cT_V\oplus \cT^W $ given
by the duality of $\cT_V$ and $ \cT^W$. Thus $\boldsymbol{g}\in \Gamma
(S^2T^*M(1,1))$ and $\boldsymbol{\Omega}\in \Gamma
(\Lambda^2T^*M(1,1))$. Then set
$$ g:=\frac{1}{\tau}\boldsymbol{g} \qquad \mbox{and} \qquad  \Omega:=\frac{1}{\tau}\boldsymbol{\Omega}.
$$
The metric $g$ is easily seen to have neutral signature.  It is
Einstein because the tractor metric on $\mathcal{T}$ is parallel for the
tractor connection (see \cite{CGH-duke} for the analogous c-projective
case). The tractor connecction arises from the usual parallel transport on the
vector space $V\oplus V^*$ viewed as an affine manifold.

\end{document}